\numberwithin{equation}{section}
\newtheorem{theorem}{Theorem}[section]
\newtheorem{conjecture}[theorem]{Conjecture}
\newtheorem{lemma}[theorem]{Lemma}
\newtheorem{proposition}[theorem]{Proposition}
\newtheorem{definition}[theorem]{Definition}
\newtheorem{corollary}[theorem]{Corollary}
\theoremstyle{remark}
\newtheorem{example}[theorem]{Example}
\newcommand{\vanish}[1]{}
\begin{document}

\title[Efron's coins]{Efron's coins and the Linial arrangement}

\author[G\'abor Hetyei]{G\'abor Hetyei}

\address{Department of Mathematics and Statistics,
  UNC-Charlotte, Charlotte NC 28223-0001.
WWW: \tt http://www.math.uncc.edu/\~{}ghetyei/.}

\subjclass [2010]{Primary 05C69; Secondary 60C5, 52C35, 05A}

\dedicatory{To $\mbox{(Richard P.\ S.\ )}^2$}

\keywords{nontransitive dice, semiacyclic tournament}

\date{\today}

\begin{abstract}
We characterize the tournaments that are dominance graphs of sets of
(unfair) coins in which each coin displays its larger side with greater
probability. The class of these tournaments coincides with the class of
tournaments whose vertices can be numbered in a way that makes them
semiacyclic, as defined by Postnikov and Stanley. We provide an example
of a tournament on nine vertices that can not be made semiacyclic, yet it may be
represented as a dominance graph of coins, if we also allow coins that
display their smaller side with greater probability. We conclude with an
example of a tournament with $81$ vertices that is not the dominance
graph of any system of coins. 
\end{abstract}

\maketitle

\section*{Introduction}

A fascinating paradox in probability theory is due to B.\ Efron, who
devised a four element set of nontransitive dice~\cite{Gardner}: the
first has four faces labeled $4$ and two faces labeled $0$, the second has
all faces labeled $3$, the third has four faces labeled $2$ and two
faces labeled $6$, the fourth has three faces labeled $1$ and three
faces labeled $5$. On this list, die number $i$ defeats die number $i+1$
in the cyclic order in the following sense: when we roll the pair of dice
simultaneously, die number $i$ is more likely to display the larger number
than die number $i+1$. The paradox arose the interest of Warren
Buffet, it inspired several other similar constructions and papers 
in game theory and probability: sample references include
~\cite{Alon-etc,Bednay-Bozoki,Gervacio-Maehara,Honsberger,Savage}.

This paper intends to investigate a hitherto unexplored aspect of Efron's
original example: on each of its dice, only at most two numbers
appear. They could be replaced with unfair coins, which display one of
two numbers with a given probability. This restriction seems to be
strong enough that we should be able to describe exactly which
tournaments can be realized as {\em dominance graphs} of collections of
unfair coins, where the direction of the arrows indicates which coin of
a given pair is more likely to display the larger number. 

 Our paper gives a complete characterization in the case
when we restrict ourselves to the use of {\em winner} coins: these are
coins that are more likely to display their larger number. The answer,
stated in Theorem~\ref{thm:winners}, is that a tournament has such a
representation exactly when its vertices may be numbered in a way that
it becomes a {\em semiacylic tournament}. These tournaments were
introduced by Postnikov and Stanley~\cite{Postnikov-Stanley}, the number
of semiacyclic tournaments on $n$ numbered vertices is the same as the
number of regions of the $(n-1)$-dimensional Linial arrangement. Our
result allows to construct an example of a tournament on $9$-vertices
that can not be represented using winner coins only. On the other hand,
we will see that this example is representable if we also allow the use
of {\em loser} coins, that is, coins that are more likely to display
their smaller number. However, as we will see in Theorem~\ref{thm:prod},
any tournament that is not representable with a set of winner coins
only, gives rise via a direct product operation to a tournament that is
not representable by any set of coins. In particular, we obtain an
example of a tournament on $81$ vertices that can not be represented by
any set of coins as a dominance graph.
Our results motivate several open questions, listed in the concluding
Section~\ref{sec:conc}.

\section{Preliminaries}

A hyperplane arrangement is a finite collection of codimension one hyperplanes
in a finite dimensional vectorspace, together with the induced partition
of the space into regions. The number of these regions may be expressed
in terms of the M\"obius function in the intersection poset of the
hyperplanes, using Zaslavsky's formula~\cite{Zaslavsky}. 

The {\em Linial arrangement} ${\mathcal L}_{n-1}$ is the hyperplane arrangement
\begin{equation}
\label{eq:Linial}  
x_i-x_j=1, \quad 1\leq i<j\leq n
\end{equation}
in the $(n-1)$-dimensional vector space $V_{n-1}=\{(x_1,\ldots,x_n)\in
{\mathbb R}^n \::\: x_1+\cdots + x_n=0\}$. We will use the 
combinatorial interpretation of the regions of ${\mathcal L}_{n-1}$ 
in terms of {\em semiacyclic tournaments}, due
to Postnikov and Stanley~\cite{Postnikov-Stanley}.  A {\em tournament}
on the vertex set $\{1,\ldots,n\}$ is a directed graph with no loops nor
multiple edges, such that for each $2$-element subset $\{i,j\}$ of
$\{1,\ldots,n\}$, exactly one of the directed edges $i\rightarrow j$ and
$j\rightarrow i$ belongs to the graph. We may think of a tournament as
the visual representation of the outcomes of all games in a championship,
such that each team plays against each other team exactly once, and
there is no ``draw''.
\begin{definition} 
A directed edge $i\rightarrow j$ is called an {\em ascent} if $i<j$ and
it is a {\em descent} if $i>j$. For any directed cycle
$C=(c_1,\ldots,c_m)$ we denote the number of directed edges that are
ascents in the cycle by $\operatorname{asc}(C)$, and the number of
directed edges that are descents by $\operatorname{desc}(C)$. A cycle is {\em ascending}
if it satisfies $\operatorname{asc}(C)\geq \operatorname{desc}(C)$. A
tournament on $\{1,\ldots,n\}$ is {\em semiacyclic} if it contains no
ascending cycle.  
\end{definition}
To each region $R$ in ${\mathcal L}_{n-1}$ we may associate a tournament 
on $\{1,\ldots,n\}$ as follows: for each $i<j$ we set $i\rightarrow j$
if $x_i>x_j+1$ and we set $j\rightarrow i$ if $x_i<x_j+1$. Postnikov and
Stanley, and independently Shmulik Ravid, gave the following
characterization of the tournaments arising this way~\cite[Proposition
  8.5]{Postnikov-Stanley} 
\begin{proposition}
\label{prop:semiacyclic}
A tournament $T$ on $\{1,\ldots,n\}$ corresponds to a region $R$ in
${\mathcal L}_{n-1}$ if and only if $T$ is semiacyclic. Hence the number
$r({\mathcal L}_{n-1})$ of regions of ${\mathcal L}_{n-1}$ is the number
of semiacyclic tournaments on $\{1,\ldots,n\}$. 
\end{proposition}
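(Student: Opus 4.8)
The plan is to set up a dictionary between points of $V_{n-1}$ and tournaments and to reduce both implications to a solvability statement about a system of difference inequalities. Since adding a constant vector $(c,\dots,c)$ leaves every difference $x_i-x_j$ unchanged, the constraint $x_1+\cdots+x_n=0$ is harmless, and I would work first with an arbitrary $x=(x_1,\dots,x_n)\in\mathbb R^n$. Assigning to a directed edge $a\to b$ of $T$ the weight $w(a\to b)=1$ when $a<b$ (an ascent) and $w(a\to b)=-1$ when $a>b$ (a descent), the region containing $x$ has $T$ as its associated tournament precisely when
\begin{equation}
\label{eq:diffsystem}
x_a-x_b>w(a\to b)\qquad\text{for every directed edge }a\to b\text{ of }T.
\end{equation}
So it suffices to show that \eqref{eq:diffsystem} is solvable if and only if $T$ is semiacyclic, together with the observation (needed for the enumeration) that distinct regions give distinct tournaments.

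For the ``only if'' direction, suppose $x$ solves \eqref{eq:diffsystem} and let $C=(c_1\to c_2\to\cdots\to c_m\to c_1)$ be any directed cycle of $T$. Adding the inequalities $x_{c_k}-x_{c_{k+1}}>w(c_k\to c_{k+1})$ for $k=1,\dots,m$ (with $c_{m+1}=c_1$), the left side telescopes to $0$ while the right side equals $\operatorname{asc}(C)-\operatorname{desc}(C)$; hence $\operatorname{asc}(C)<\operatorname{desc}(C)$, so $C$ is not ascending. Thus every tournament arising from a region is semiacyclic.

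The ``if'' direction is the crux, and I would obtain a solution of \eqref{eq:diffsystem} by a shortest-path argument after a small perturbation. If $T$ is semiacyclic then $\operatorname{desc}(C)-\operatorname{asc}(C)\ge 1$ for every directed cycle $C$, while $\operatorname{asc}(C)+\operatorname{desc}(C)\le n$. Fix $\eps$ with $0<\eps<1/n$, give every ascent the length $-1-\eps$ and every descent the length $1-\eps$, so that any directed cycle of $T$ has total length $\operatorname{desc}(C)-\operatorname{asc}(C)-\eps(\operatorname{asc}(C)+\operatorname{desc}(C))\ge 1-\eps n>0$. Adjoin a source vertex $s$ with a length-$0$ arc $s\to v$ for each $v\in\{1,\dots,n\}$; since $s$ lies on no cycle and every cycle of $T$ has positive length, this weighted digraph has no negative cycle, so the shortest-path distances $x_v\df\delta(s,v)$ are well defined and finite. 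The triangle inequality $\delta(s,v)\le\delta(s,u)+(\text{length of }u\to v)$, valid for every edge $u\to v$ of $T$, translates into $x_a-x_b\ge 1+\eps>1$ for ascents and $x_a-x_b\ge-1+\eps>-1$ for descents, which is exactly \eqref{eq:diffsystem}. After translating $x$ into $V_{n-1}$ it misses every hyperplane of $\mathcal L_{n-1}$, and the region containing it has $T$ as associated tournament. I expect the main obstacle to be precisely the passage from the non-strict inequalities that shortest paths naturally produce to the strict inequalities demanded by \eqref{eq:diffsystem}; this is what forces the $\eps$-perturbation, which works because $\operatorname{desc}(C)-\operatorname{asc}(C)$ is a positive integer.

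Finally, for the enumeration: a region of $\mathcal L_{n-1}$ is a connected component of the complement of the hyperplanes, hence is the solution set of a fixed choice, for each pair $i<j$, of one of the two strict inequalities $x_i-x_j>1$ and $x_i-x_j<1$; being an intersection of open half-spaces it is convex, so a given such sign pattern determines at most one region. Since the associated tournament records exactly this sign pattern, the map sending a region to its tournament is injective; by the first part its image lies among the semiacyclic tournaments, and by the second part it hits all of them. Therefore $r(\mathcal L_{n-1})$ equals the number of semiacyclic tournaments on $\{1,\dots,n\}$, as claimed.
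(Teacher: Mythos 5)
Your argument is correct and complete. Note, however, that the paper itself does not prove this proposition: it is quoted as a known result of Postnikov and Stanley (and, independently, Shmulik Ravid), with a citation to Proposition 8.5 of their paper, so there is no internal proof to compare against. On its own merits, your proposal is sound: the translation of ``$x$ lies in the region labelled by $T$'' into the system $x_a-x_b>w(a\to b)$ with $w=+1$ on ascents and $w=-1$ on descents is exactly right; the telescoping sum around a cycle gives the forward implication; and the shortest--path construction with edge lengths $-1-\eps$ on ascents and $1-\eps$ on descents, $0<\eps<1/n$, is a clean way to produce a point satisfying the strict inequalities, since semiacyclicity forces $\operatorname{desc}(C)-\operatorname{asc}(C)\geq 1$ for every cycle and hence every cycle has positive total length. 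The convexity observation correctly upgrades the bijection on sign patterns to the stated count of regions. This is essentially the standard Farkas/potential-function argument for systems of difference constraints, in the same spirit as the cycle criterion Postnikov and Stanley use for general deformations of the braid arrangement, but your version is self-contained and slightly more explicit about how the integrality of $\operatorname{desc}(C)-\operatorname{asc}(C)$ yields the needed slack for strictness.
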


\section{The coin model and its elementary properties}

In this paper we will study $n$ element sets of (fair and unfair)
coins. Each coin is described by a triplet of real parameters $(a_i,b_i,
x_i)$ where $a_i\leq b_i$ and $x_i>0$ hold (here $i=1,2,\ldots,n$). The
$i$th coin has the number $a_i$ on one side and $b_i$ on the
other.  After flipping it, it shows the number $a_i$ with probability
$1/(1+x_i)$, equivalently it shows the number $b_i$ with probability
$x_i/(1+x_i)$. Note that, as $x_i$ ranges over the set of all positive
real numbers, the probability $1/(1+x_i)$ ranges over all numbers in the
open interval $(0,1)$. We call the triplet $(a_i,b_i,x_i)$ the {\em
  type} of the coin. We say that coin $i$ {\em dominates} coin $j$ if,
after tossing both at the same time, the probability that coin $i$
displays a strictly larger number than coin $j$ is greater than
the probability that coin $j$ displays a strictly larger number. In
other words, when we flip both coins, the one displaying the larger
number ``wins'', the other one ``loses'', and we consider both coins
displaying the same number a ``draw''. The coin that is more likely to
win, dominates the other.  

We represent the domination relation as the {\em dominance
  graph}, whose vertices are the coins and there is a directed edge
$i\rightarrow j$ exactly when coin $i$ dominates coin $j$. We will be
interested in the question, which tournaments may be represented as the
dominance graph of a set of $n$ coins. 

Up to this point we made one, inessential simplification: we assume
that $x_i$ can not be zero or infinity, that is, no coin can
land on the same side with probability $1$. If we have such a coin, we may
replace it with a coin of type $(a,a,1)$, that is, a fair coin that has
the same number written on both sides. Since we are interested only in
dominance graphs as tournaments we will also require that {\em for each pair of
coins one dominates the other}. After fixing the parameters $a_i$
and $b_i$ for each coin, this restriction will exclude the points of
$\binom{n}{2}$ hypersurfaces of codimension one from the set of possible
values of $(x_1,x_2,\ldots,x_n)$, each hypersurface being defined by an
equation involving a pair of variables $\{x_i, x_j\}$. The equations
defining these surfaces may be obtained by replacing the inequality
symbols with equal signs in the inequalities stated in
Table~\ref{tab:dominance} below. In particular,  we assume that {\em
  different coins have different types}. In the rest of this section we
will describe in terms of the types when the $i$th coin dominates the
$j$th coin. To reduce the number of cases to be considered we first show
that we may assume that no coin has the same number written on both
sides. This is a direct consequence of the next lemma.

\begin{lemma}
\label{lem:2values}
\end{lemma}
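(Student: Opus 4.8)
The content to establish is that a coin of type $(a,a,x)$ may be traded for a two-valued coin without disturbing any domination: concretely, I would show that replacing such a coin by one of type $(a-\eps,a+\eps,1)$ leaves every edge of the dominance graph exactly where it was, provided $\eps>0$ is chosen small enough. Applying this to each coin carrying the same number on both sides then yields a set of coins realizing the same tournament in which $a_i<b_i$ for all $i$, which is the reduction the surrounding discussion asks for.

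It helps to record coin $i$ by its output distribution $\mu_i$ — the measure with $\mu_i(\{a_i\})=1/(1+x_i)$ and $\mu_i(\{b_i\})=x_i/(1+x_i)$ — and to note that coin $i$ dominates coin $j$ exactly when
\[
D_{ij}\df\sum_{s,t}\mu_i(\{s\})\,\mu_j(\{t\})\operatorname{sgn}(s-t)>0,\qquad \operatorname{sgn}(0)=0 .
\]
Since $D_{ij}=-D_{ji}$, the running hypothesis that domination is a tournament is just that no $D_{ij}$ vanishes. Replacing $\mu_i=\delta_a$ by the symmetric split $\tfrac12\delta_{a-\eps}+\tfrac12\delta_{a+\eps}$, the whole proof rests on the elementary identity
\[
\tfrac12\operatorname{sgn}(a-\eps-t)+\tfrac12\operatorname{sgn}(a+\eps-t)=\operatorname{sgn}(a-t),
\]
which holds for any $t\ne a$ as soon as $\eps<|a-t|$ (both terms then equal $\operatorname{sgn}(a-t)$), and holds for $t=a$ and \emph{every} $\eps>0$ because the left side is $\tfrac12(-1)+\tfrac12(1)=0$. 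Taking $\eps$ smaller than $|a-t|$ over the finite set of numbers $t\ne a$ occurring on the remaining coins and averaging this identity against each $\mu_j$ shows that $D_{ij}$ is unchanged for every $j$, while the other $D_{jk}$ are visibly untouched; so the dominance graph, edge for edge, is the same.

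The one place that needs care — and the reason for splitting $a$ symmetrically rather than merely nudging a single side — is a coin $j$ that already carries the number $a$. A one-sided perturbation would turn the tie at $(a,a)$ into a win or a loss and could flip the edge between $i$ and $j$; the symmetric split instead breaks that tie into a win and a loss of equal weight, contributing nothing on balance, which is why $D_{ij}$ is preserved exactly rather than only approximately. After that the remaining work is bookkeeping: if several coins have a repeated value, handle them one at a time, each time choosing $\eps$ against the current list of coin values (now including the values created by earlier replacements); and, having fixed $\eps$, shrink it a bit more if necessary so the new coin's type differs from all the others, keeping the convention that distinct coins have distinct types. I expect the only real insight to be the symmetric-split identity; everything downstream of it is routine.
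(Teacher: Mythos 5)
Your proof is correct, and it takes a genuinely different --- and in fact more robust --- route than the paper's. The paper replaces the one-valued coin $(a,a,x)$ by $(a',a,1)$ with $a'$ squeezed just below $a$, i.e.\ it keeps the value $a$ on one face and only lowers the other; it then checks by hand the two cases in which another coin $j$ carries the value $a$. You instead split symmetrically into $(a-\varepsilon,a+\varepsilon,1)$ and use the identity $\tfrac12\operatorname{sgn}(a-\varepsilon-t)+\tfrac12\operatorname{sgn}(a+\varepsilon-t)=\operatorname{sgn}(a-t)$ for small $\varepsilon$, so every margin $D_{ij}$ (in your notation) is preserved \emph{exactly}, with no case analysis; since no $D_{ij}$ changes, the tournament property is also automatically inherited. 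What your version buys is precisely the case the paper's argument mishandles: in the paper's Case~2 ($a_j<a_i'<a_i=b_j$) the event ``coin $i$ displays $a_i'$ and loses'' occurs when coin $j$ displays $b_j$, not $a_j$, so its probability is $x_j/(2(1+x_j))$ rather than $1/(2(1+x_j))$; the correct comparison is then $1/(1+x_j)$ against $x_j/(2(1+x_j))$, and for $x_j>2$ the one-sided replacement actually reverses the edge (for instance, against a coin of type $(0,1,3)$ a coin of type $(1,1,x)$ wins with probability $1/4$ and never loses, while its replacement $(1/2,1,1)$ still wins with probability $1/4$ but now loses with probability $3/8$). Your symmetric split turns the former tie at $(a,a)$ into a win and a loss of equal weight and therefore cannot shift any margin; it establishes the reduction the surrounding text needs, just with a different replacement coin from the one the lemma statement prescribes.
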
  
Suppose there is a coin of type $(a_i,b_i,x_i)$, satisfying
$a_i=b_i$. Replace this coin with a coin of type $(a_i',b_i, 1)$  where
$a_i'$ is any real number that is less than $a_i$ but larger than any
element in the intersection of the set $\{a_1, b_1, a_2, b_2,
\ldots, a_n, b_n\}$ with the open interval $(-\infty, a_i)$. Then
modified system of coins has the same dominance graph.
\begin{proof}
We only need to verify that another coin $j$, of type $(a_j,b_j,x_j)$
dominates a coin of type $(a_i,a_i,x_i)$ if and only if it dominates
a coin of type $(a_i',a_i,1)$. This is certainly the case when neither
of $a_j$ and $b_j$ is equal to $a_i$, as these numbers compare to $a_i$
the same way as to $a_i'$. We are left to consider the case when
exactly one of $a_j$ and $b_j$ equals $a_i$ (both can not equal because
then there is no directed edge between $i$ and $j$ in the dominance
graph).

\noindent{\bf Case 1:} $a_j=a_i$ and so $a_i'<a_i=a_j<b_j$ hold. 
In the original system, as well as in the modified one, only coin $j$
can win against coin $i$ and it does so with a positive probability. We
have $j\rightarrow i$ in both dominance graphs. 

\noindent{\bf Case 2:} $b_j=a_i$ and so $a_j<a_i'<a_i=b_j$. In the
original system only coin $j$ can lose (when it displays $a_j$) so we
have $i\rightarrow j$ in the dominance graph. In the modified system,
$i$ can also lose sometimes, exactly when it displays $a_i'$ (with
probability $1/2$) and coin $j$ displays $a_j$. The probability of this
event is $1/(1+x_j)/2$. A draw can also occur, exactly when both coins
display $a_i=b_j$, the probability of this event is $x_j/(1+x_j)/2$.
By subtracting these probabilities from $1$  we obtain that the
probability that $j$ loses is exactly $1/2$. This is more than the
probability of $i$ losing, as $1/(1+x_j)/2<1/2$. Thus $i\rightarrow j$
still holds in the dominance graph of the modified system. 
\end{proof}
From now on we will assume that the type $(a_i,b_i, x_i)$ of each coin
satisfies $a_i<b_i$. If the type also satisfies $x_i>1$ then we call the
coin a {\em winner}, if it satisfies $x_i<1$, we call it a {\em
  loser}. Note that a loser coin can dominate a winner coin under the
appropriate circumstances, these terms refer to the fact whether the
coin is more likely to display its larger or smaller value. Note that a
coin satisfies $x_i=1$ exactly when it is a {\em fair coin}. 

In several results we will list our coins in {\em increasing
  lexicographic order of their types}.
\begin{definition}
We say that the type $(a_i,b_i,x_i)$ is lexicographically smaller than
the type $(a_j,b_j,x_j)$, if one of the following holds:
\begin{enumerate}
\item $a_i<a_j$;
\item $a_i=a_j$ and $b_i<b_j$; 
\item $a_i=a_j$, $b_i=b_j$ and $x_i<x_j$.
  \end{enumerate}
\end{definition}  

\begin{theorem}
  Assume we list the coins in increasing lexicographic order by their
  types, comparing the coordinates left to right. Then, for $i<j$, we
  have $i\rightarrow j$ if and only if exactly one of the following
  conditions is satisfied: 
\begin{enumerate}
\item $a_i=a_j<b_i<b_j$ and $1/x_j >1/x_i +1$;
\item $a_i<a_j<b_j<b_i$ and $x_i>1$;
\item $a_i<a_j<b_i=b_j$ and $x_i>x_j+1$;
\item $a_i<a_j<b_i<b_j$ and $(1/x_i +1)(x_j+1) < 2$.
\end{enumerate}    
\end{theorem}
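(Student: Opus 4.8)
The plan is to rewrite ``$i\rightarrow j$'' as a strict inequality between two probabilities and then carry out a short, exhaustive case analysis on the relative order of the four real numbers $a_i,a_j,b_i,b_j$. Write $p_i=1/(1+x_i)$ and $q_i=x_i/(1+x_i)$ for the probabilities that coin $i$ shows $a_i$, respectively $b_i$, and $p_j,q_j$ likewise; all four numbers are positive and $p_i+q_i=p_j+q_j=1$. Since the two coins are flipped independently, the joint law on the four outcomes $(u,v)$ with $u\in\{a_i,b_i\}$ and $v\in\{a_j,b_j\}$ is a product measure, and ``$i\rightarrow j$'' means $\Pr[u>v]>\Pr[v>u]$.

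The first real step is the reduction to cases. Because the types are in increasing lexicographic order and $a_i<b_i$, $a_j<b_j$, we have either $a_i=a_j$ (forcing $b_i\le b_j$) or $a_i<a_j$, and in the second situation one splits further according to the position of $b_i$ relative to $a_j$ and $b_j$. This yields a finite list of order types for $(a_i,a_j,b_i,b_j)$. Four of them are exactly the hypotheses of conditions $(1)$--$(4)$, namely $a_i=a_j<b_i<b_j$; $a_i<a_j<b_j<b_i$; $a_i<a_j<b_i=b_j$; and $a_i<a_j<b_i<b_j$, and these are pairwise incompatible. The remaining order types are $b_i\le a_j$ (with $a_i<a_j$) and $a_i=a_j$, $b_i=b_j$ (which forces $x_i<x_j$ by lexicographic order). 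In the first of these coin $i$ never strictly wins, so $j\rightarrow i$; in the second, coin $i$ wins with probability $q_i p_j$ and coin $j$ with probability $p_i q_j$, and $p_i q_j>q_i p_j$ because $x_i<x_j$, so again $j\rightarrow i$. Since none of $(1)$--$(4)$ holds in either of these leftover order types, the claimed equivalence is (vacuously) correct there.

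It then remains to treat the four main order types. For each one I would list the at most four outcomes on which coin $i$ wins and those on which coin $j$ wins, sum the corresponding products of $p$'s and $q$'s, and reduce $\Pr[u>v]>\Pr[v>u]$ to a polynomial inequality by multiplying through by the positive quantity $(1+x_i)(1+x_j)$ (and, in case $(1)$, by $x_i x_j$); because every factor that is cleared is strictly positive, each manipulation is reversible. For instance, in the order type $a_i<a_j<b_j<b_i$ coin $i$ wins precisely when it displays $b_i$, so the inequality is $q_i>p_i$, that is $x_i>1$, which is $(2)$. Similarly the order type $a_i=a_j<b_i<b_j$ reduces to $q_i p_j>q_j$, i.e.\ $1/x_j>1/x_i+1$, which is $(1)$; the order type $a_i<a_j<b_i=b_j$ reduces to $q_i p_j>p_i$, i.e.\ $x_i>x_j+1$, which is $(3)$; and $a_i<a_j<b_i<b_j$ reduces to $q_i p_j>p_i+q_i q_j$, i.e.\ $(1/x_i+1)(x_j+1)<2$, which is $(4)$. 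Finally, mutual exclusivity of the four order types shows that at most one of $(1)$--$(4)$ can be satisfied, so ``at least one holds'' and ``exactly one holds'' coincide, which is precisely the form of the statement.

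These four reductions are routine algebra; the part that actually demands care is the bookkeeping in the reduction step: one must check that the case split on the order of $a_i,a_j,b_i,b_j$ is genuinely exhaustive, pay explicit attention to the boundary configurations $b_i=a_j$ and $a_i=a_j$, $b_i=b_j$, and verify that in those boundary configurations one of the two coins still strictly dominates the other, so that they do not lie on any of the $\binom{n}{2}$ excluded codimension-one hypersurfaces.
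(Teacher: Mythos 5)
Your proposal is correct and follows essentially the same route as the paper: the same exhaustive case split on the order type of $a_i,a_j,b_i,b_j$ under the lexicographic hypothesis (the paper's Table~\ref{tab:dominance}), with the same win/loss probability computations reducing each case to the stated inequality. The only difference is presentational -- you spell out the two ``leftover'' order types and the probability bookkeeping that the paper dismisses as obvious.
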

\begin{proof}
Assuming that $(a_i,b_i)\leq (a_j,b_j)$ holds in the lexicographic
order, the six cases corresponding to the six lines of
Table~\ref{tab:dominance} below are a complete
and pairwise mutually exclusive list of possibilities.
\begin{table}[h]
\begin{tabular}{|c|c|}
\hline    
Relation between $(a_i,b_i)$ and $(a_j,b_j)$ & $i\rightarrow j$ exactly
when\\
\hline
\hline
$(a_i,b_i)=(a_j,b_j)$ & $x_i>x_j$\\
\hline
$a_i=a_j<b_i<b_j$ & $1/x_j >1/x_i +1$\\
\hline
$a_i<a_j<b_j<b_i$ & $x_i>1$\\
\hline
$a_i<a_j<b_i=b_j$ & $x_i>x_j+1$\\
\hline
$a_i<a_j<b_i<b_j$ & $(1/x_i +1)(x_j+1) < 2$\\
\hline
$a_i<b_i\leq a_j <b_j$ & never\\
\hline
\end{tabular}  
\caption{Characterization of the dominance relation in terms of the types}
\label{tab:dominance}
\end{table}
The statement on the first line of Table~\ref{tab:dominance} is obvious,
and $x_i>x_j$ can not happen as $(a_i,b_i,x_i)$ comes
before $(a_i,b_i,x_j)$ in the lexicographic order exactly when $x_i<
x_j$ holds. The description on
the third line is also clear, coin $i$ wins exactly when it displays
$b_i$, otherwise it loses.  The last line is also obvious: coin $j$ can
not lose in that case and it wins with a positive probability. The
remaining lines require just a little more attention.   

To prove the statement on the second line, observe first that
coin $i$ wins exactly when it displays $b_i$ and coin $j$ displays
$a_j$, and it loses exactly when coin $j$ displays $b_j$ (regardless the
outcome of tossing coin $i$). Thus $i\rightarrow j$ exactly when
$$
\frac{x_i}{x_i+1}\cdot \frac{1}{x_j+1}>\frac{x_j}{x_j+1}
$$
which is equivalent to $1/x_i+1<1/x_j$. The statement on the fourth line
is completely analogous, only easier. 

We are left to prove the statement on the fifth line. In this case, coin
$i$ wins exactly when it displays $b_i$ and coin $j$ displays $a_j$, and
it loses in all other cases (there is never a draw). Therefore coin $i$
dominates coin $j$ if and only if the probability of $i$ winning is more
than $1/2$, that is, we have
$$
\frac{x_i}{x_i+1}\cdot \frac{1}{x_j+1}>\frac{1}{2}.
$$
This is obviously equivalent to the statement on the fifth line. 

\end{proof}

\section{Winner coins represent semiacyclic tournaments}

In this section we give a complete description of all tournaments that
may be realized as the domination graph of a system of coins that has
only winner and fair coins. In proving our main result, the
following lemma plays an important role.

\begin{lemma}
\label{lem:bdec}
Assume that the $i$th coin, of type $(a_i,b_i,x_i)$ dominates the $j$th coin, of
type $(a_j,b_j,x_j)$ and that the $j$th coin is not a loser coin. Then we
must have $b_i\geq b_j$.
\end{lemma}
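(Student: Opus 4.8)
The plan is to argue by contradiction. Suppose coin $i$ dominates coin $j$, that coin $j$ is not a loser (so $x_j\ge 1$), and --- contrary to the claim --- that $b_i<b_j$. Since every coin satisfies $a<b$, the number $b_j$ is then strictly larger than \emph{both} faces $a_i$ and $b_i$ of coin $i$. The idea is to bound the two winning probabilities separately, using only this inequality between the face values together with $x_j\ge 1$.

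First I would observe that whenever coin $j$ lands showing $b_j$, an event of probability $x_j/(1+x_j)$, coin $j$ displays a strictly larger number than coin $i$ no matter which face coin $i$ shows; hence the probability that coin $j$ wins is at least $x_j/(1+x_j)$. Dually, coin $i$ can win only on the complementary event that coin $j$ shows $a_j$, so the probability that coin $i$ wins is at most $1/(1+x_j)$. Now invoking $x_j\ge 1$ gives $x_j/(1+x_j)\ge 1/2\ge 1/(1+x_j)$, and chaining these estimates yields
$$
\Pr[\,i\text{ wins}\,]\ \le\ \frac{1}{1+x_j}\ \le\ \frac{x_j}{1+x_j}\ \le\ \Pr[\,j\text{ wins}\,].
$$
Thus coin $i$ is not strictly more likely to win than coin $j$, contradicting the hypothesis that $i$ dominates $j$; therefore $b_i\ge b_j$.

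I do not expect a genuine obstacle here: the argument is a two-line probability estimate. The only point needing a moment's care is the boundary case $x_j=1$ (coin $j$ fair), where the two bounds both equal $1/2$; but the displayed chain still holds, now with equality through the middle, which already contradicts strict domination. One could instead verify the claim row by row against Table~\ref{tab:dominance}, but that route first requires reducing to the lexicographic ordering of the two coins and then examining several cases, so the direct estimate above is cleaner.
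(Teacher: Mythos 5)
Your argument is correct and is essentially the paper's own proof: both assume $b_i<b_j$ for contradiction and observe that coin $j$ wins whenever it shows $b_j$, an event of probability $x_j/(1+x_j)\geq 1/2$, which already rules out $i\rightarrow j$. Your extra step of bounding $\Pr[i\text{ wins}]$ by $1/(1+x_j)$ just makes explicit what the paper leaves implicit.
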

\begin{proof}
Assume by way of contradiction that $b_j>b_i$ holds. The $j$th coin wins
whenever it displays $b_j$, and this happens with probability
$x_j/(1+x_j)\geq 1/2$. We obtain that $i\rightarrow j$ can not hold, a
contradiction.
\end{proof}

\begin{theorem}
\label{thm:winners}
Assume a set of $n$ winner and fair coins is listed in increasing lexicographic
order of their types. If the domination graph is a tournament, it must be
semiacyclic. Conversely  every semiacyclic tournament is the domination
graph of a set of winner coins. 
\end{theorem}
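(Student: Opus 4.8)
The plan is to prove the two implications separately, using the case analysis of Table~\ref{tab:dominance} together with Lemma~\ref{lem:bdec} for the forward direction, and an explicit construction of coin types for the converse.

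For the forward direction, suppose the coins are listed in increasing lexicographic order of their types, all are winner or fair coins (so $x_i\geq 1$ for every $i$), and the dominance graph is a tournament $T$. I want to show $T$ has no ascending cycle. First I would record the key structural consequence of the lexicographic ordering and Lemma~\ref{lem:bdec}: since no coin is a loser, whenever $i\to j$ we have $b_i\geq b_j$. Combined with the ordering $a_1\leq a_2\leq\cdots$, this says that along any directed edge the $b$-coordinate is weakly decreasing. Now take any directed cycle $C=(c_1,\dots,c_m,c_1)$ in $T$; summing the relation $b_{c_k}\geq b_{c_{k+1}}$ around the cycle forces $b_{c_1}=b_{c_2}=\cdots=b_{c_m}$, i.e.\ every edge of the cycle is of the ``$a_i<a_j<b_i=b_j$'' type (case~(3) of the Theorem) or the ``$(a_i,b_i)=(a_j,b_j)$'' type — but the latter is excluded since distinct coins have distinct types, so in fact all $b$'s equal and along each edge $i\to j$ with $i<j$ we are in case~(3), requiring $x_i>x_j+1$, while along each edge $i\to j$ with $i>j$ (a descent) the lexicographic comparison and $b_i=b_j$ put us in the first line of Table~\ref{tab:dominance} read in the other direction, giving $x_i>x_j$. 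Thus every ascent edge $c_k\to c_{k+1}$ of $C$ satisfies $x_{c_k}>x_{c_{k+1}}+1$ and every descent edge satisfies $x_{c_k}>x_{c_{k+1}}$; in particular $x$ strictly decreases along every edge, contradicting the fact that $C$ is a cycle. Hence $T$ has no directed cycle at all among coins with equal $b$, and therefore no ascending cycle — actually this argument shows more, so I would instead only push it far enough to contradict $\operatorname{asc}(C)\geq\operatorname{desc}(C)$: reading case~(3) as ``an ascent costs strictly more than $1$ in the $x$-parameter'' and the equal-type line as ``a descent costs something positive,'' the net change of $x$ around the cycle is at most $\operatorname{desc}(C)\cdot(\text{something})-\operatorname{asc}(C)\cdot 1<0$ once $\operatorname{asc}(C)\geq\operatorname{desc}(C)$, which is impossible. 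I expect the bookkeeping of exactly which line of the table each edge falls into, as a function of whether it is an ascent or descent, to be the fussy part here; Lemma~\ref{lem:bdec} is what collapses it.

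For the converse, let $T$ be a semiacyclic tournament on $\{1,\dots,n\}$. By Proposition~\ref{prop:semiacyclic}, $T$ corresponds to a region of the Linial arrangement $\mathcal{L}_{n-1}$, so there exist real numbers $y_1,\dots,y_n$ with $i\to j$ (for $i<j$) iff $y_i>y_j+1$ and $j\to i$ iff $y_i<y_j+1$. The idea is to build coins all of whose $a$-coordinates are equal (say $a_i=0$) and all of whose $b$-coordinates are equal (say $b_i=1$), and to put the entire description into the $x_i$: by the first line of Table~\ref{tab:dominance}, for two coins with the same pair $(a,b)$ we have $i\to j$ iff $x_i>x_j$, which recovers an arbitrary linear order but not a general tournament — so equal $(a,b)$ across all coins is too rigid. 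Instead I would use case~(3): take all $b_i$ equal but let $a_i$ be strictly increasing, $a_1<a_2<\cdots<a_n<b$; then for $i<j$ the relevant line of the table is ``$a_i<a_j<b_i=b_j$,'' and $i\to j$ holds iff $x_i>x_j+1$. Setting $x_i$ to realize the same inequalities as the $y_i$ — concretely $x_i=y_i+M$ for a large constant $M$ chosen so that all $x_i>1$ (possible since $y_i>y_j+1$ is translation-invariant and there are finitely many coins) — we get $i\to j$ iff $x_i>x_j+1$ iff $y_i>y_j+1$, exactly matching $T$ on the ascent edges, and the complementary inequality $x_i<x_j+1$ gives $j\to i$, matching the descent edges. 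All coins have $x_i=y_i+M>1$, so they are winner coins, and distinctness of types follows from the $a_i$ being distinct. The one check to make is that the lexicographic order of the constructed types agrees with the natural order $1,2,\dots,n$, which holds because $a_1<\cdots<a_n$; then the hypothesis ``listed in increasing lexicographic order'' of the statement is satisfied and the table applies verbatim.

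The main obstacle, as I see it, is not either implication in isolation but making sure the forward direction's case analysis is airtight: one must verify that under the hypothesis ``all coins winner or fair'' no directed edge of $T$ can fall on lines two, four, or five of Table~\ref{tab:dominance} when it participates in a would-be ascending cycle, and Lemma~\ref{lem:bdec} together with the monotonicity of the $a_i$ is exactly the tool that rules these out and pins every cycle edge to line three (for ascents) or line one (for descents). Once that reduction is in place, the strict-decrease-of-$x$ argument closes the cycle. I would present the converse first, since it is the cleaner of the two and motivates why case~(3) is the ``universal'' case to look at in the forward direction.
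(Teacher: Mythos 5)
Your converse direction is correct and essentially identical to the paper's: lift a point from the corresponding region of the Linial arrangement, translate so all coordinates exceed $1$, and use coins with strictly increasing $a$'s and a common $b$ (the paper takes $(i,n+1,x_i')$), so that the line $a_i<a_j<b_i=b_j$ of Table~\ref{tab:dominance} reproduces the defining inequalities $x_i>x_j+1$ of the region. The forward direction also begins exactly as in the paper, with Lemma~\ref{lem:bdec} forcing all $b$'s along a cycle to coincide.

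However, your treatment of the descent edges is wrong, and that is where the real content of the forward direction sits. For a descent edge $u\rightarrow v$ of the cycle with $u>v$ and $b_u=b_v$, the pair is generically governed by the \emph{same} line $a_v<a_u<b_v=b_u$ of the table (with $v$ playing the role of the lexicographically smaller coin), not by the line $(a_v,b_v)=(a_u,b_u)$: distinctness of types only forces the triples to differ, and in general $a_v<a_u$. That line says $v\rightarrow u$ holds iff $x_v>x_u+1$, so the descent $u\rightarrow v$ yields only $x_v<x_u+1$; the parameter $x$ may \emph{increase} along a descent, just by less than $1$. Your claim that $x$ strictly decreases along every edge --- and hence that there is no directed cycle at all among coins with equal $b$ --- is false: the winner coins $(1,10,3.5)$, $(2,10,2.8)$, $(3,10,2)$ realize the non-ascending cycle $1\rightarrow 3\rightarrow 2\rightarrow 1$, and $x$ increases from $2$ to $2.8$ along the descent $3\rightarrow 2$. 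Your fallback estimate ``net change $\leq \operatorname{desc}(C)\cdot(\text{something})-\operatorname{asc}(C)\cdot 1<0$'' has the right shape, but it requires the ``something'' to be at most $1$, and that is precisely the descent bound $x_v<x_u+1$ that you never establish (and in fact contradict). With that single correction --- ascents decrease $x$ by more than $1$, descents increase it by less than $1$, so the zero net change around a cycle forces $\operatorname{desc}(C)>\operatorname{asc}(C)$ --- your argument closes and coincides with the paper's proof.
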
  
\begin{proof}
Assume we are given an $n$-element set of winner and fair coins whose
domination graph is a tournament and that the coins are listed in
increasing lexicographic order of their types. Consider a cycle
$(i_1,i_2,\ldots,i_k)$ in the domination graph. Repeated use of
Lemma~\ref{lem:bdec} yields $b_{i_1}\geq b_{i_2}\geq \cdots \geq
b_{i_k}\geq b_{i_1}$, implying $b_{i_1}=\cdots=b_{i_k}$. As a
consequence, using the fourth line in Table~\ref{tab:dominance}, we have
$x_{i_{j+1}}<x_{i_j}-1$ whenever $i_j<i_{j+1}$ and
$x_{i_{j+1}}<x_{i_{j}}+1$ whenever $i_{j+1}<i_j$, for
$j=1,\ldots,k-1$. Similarly, we have $x_{i_{1}}<x_{i_k}-1$ whenever
$i_k<i_{1}$ and $x_{i_{1}}<x_{i_{k}}+1$ whenever $i_{1}<i_k$. In other
words, the last coordinate of the type decreases by more than $1$ at
each ascent, and it increases by less than $1$ at each
descent. Therefore, a cycle can not be an ascending cycle, and the
tournament must be semiacyclic.

Conversely, consider a semiacyclic tournament $T$ on $\{1,\ldots,n\}$.
By Proposition~\ref{prop:semiacyclic} this tournament corresponds to a
region of the Linial arrangement ${\mathcal L}_{n-1}$ in the
following way. For each $i<j$ we have $i\rightarrow j$
if $x_i>x_j+1$ and we have $j\rightarrow i$ if $x_i<x_j+1$, where
$(x_1,\ldots,x_n)$ is an arbitrary point in the region. Note that some of the
coordinates must be negative or zero here, as we have
$x_1+\cdots+x_n=0$. Introducing $r:=\max\{ c-x_1, \ldots, c-x_n\}$ for
some $c>1$, 
and setting $x_i'= x_i+r$, we obtain a vector $(x_1',\ldots, x_n')$ whose
coordinates are all greater than $1$ and satisfy $i\rightarrow j$
if $x_i'>x_j'+1$ and $j\rightarrow i$ if $x'_i<x'_j+1$, for each
$i<j$. Consider now the set of $n$ coins where the $i$th coin has type
$(i,n+1,x_i')$. By the fourth line of Table~\ref{tab:dominance}, the
dominance graph of this set of coins is precisely $T$.  
\end{proof}
The proof of Theorem~\ref{thm:winners} also has the following consequence.
\begin{corollary}
\label{cor:winners}
If a set of coins does not contain loser coins and its dominance
graph is a tournament, then this tournament is also the dominance graph
of a set of winner coins that all have the same number on one of their
sides. 
\end{corollary}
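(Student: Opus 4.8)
The plan is to obtain the corollary by chaining together the two implications already established in the proof of Theorem~\ref{thm:winners}. Starting from a set of coins containing no loser coins whose dominance graph is a tournament, I would first relabel the coins so that they appear in increasing lexicographic order of their types; this relabeling changes only the names of the vertices, not the dominance graph as a tournament. By the first half of Theorem~\ref{thm:winners}, the resulting tournament on $\{1,\dots,n\}$ is semiacyclic.

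Next I would invoke the \emph{construction} used in the second half of Theorem~\ref{thm:winners}. A semiacyclic tournament on $\{1,\dots,n\}$ corresponds to a region of ${\mathcal L}_{n-1}$, and after translating a point $(x_1,\dots,x_n)$ of that region by a sufficiently large constant $r$ one obtains a vector $(x_1',\dots,x_n')$ with all $x_i'>1$ and still $i\to j$ iff $x_i'>x_j'+1$ for $i<j$. The coins of type $(i,n+1,x_i')$ then have dominance graph equal to this tournament, by the fourth line of Table~\ref{tab:dominance}. Since each $x_i'>1$, every one of these coins is a winner coin, and each of them carries the common value $n+1$ on one of its sides. Undoing the lexicographic relabeling, the tournament we started with is realized by a set of winner coins all sharing the same number on one side, as claimed.

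The only point requiring care is that ``semiacyclic'' is a property of a \emph{numbered} tournament, so the lexicographic relabeling in the first step and its reversal in the last step must be tracked; but because both directions of Theorem~\ref{thm:winners} are already stated with respect to the lexicographic order, no genuine obstacle arises. In effect the corollary is a repackaging of what the proof of Theorem~\ref{thm:winners} already gives, together with the observation that the coins produced by its converse direction are automatically winners with a shared side value.
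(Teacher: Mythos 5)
Your proposal is correct and matches the paper's intended argument exactly: the paper states the corollary as a direct consequence of the proof of Theorem~\ref{thm:winners}, namely that the forward direction yields a semiacyclic tournament (after lexicographic relabeling) and the converse construction produces winner coins of type $(i,n+1,x_i')$, all sharing the value $n+1$ on one side. Your explicit attention to tracking the relabeling is a reasonable bit of added care, but the route is the same.
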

Analogous results may also be stated for loser and fair coins. In
analogy to Lemma~\ref{lem:bdec} we can make the following observation.
\begin{lemma}
\label{lem:adec}
Assume that the $i$th coin, of type $(a_i,b_i,x_i)$ dominates the $j$th coin, of
type $(a_j,b_j,x_j)$ and that the $i$th coin is not a winner coin. Then we
must have $a_i\geq a_j$.
\end{lemma}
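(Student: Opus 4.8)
The plan is to run the same kind of contradiction argument that proved Lemma~\ref{lem:bdec}, with the roles of the two sides of a coin, and of ``winning'' and ``losing'', interchanged. First I would assume, for contradiction, that $a_i<a_j$ holds. Since $a_j\leq b_j$, whatever coin $j$ displays is then strictly larger than $a_i$; hence coin $i$ loses every time it displays $a_i$, and this happens with probability $1/(1+x_i)$. Because coin $i$ is not a winner we have $x_i\leq 1$, so $1/(1+x_i)\geq 1/2$.

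Next I would compare the two winning probabilities. On one hand the probability that coin $j$ wins is at least the probability that coin $i$ displays $a_i$, hence at least $1/2$. On the other hand coin $i$ can win only when it displays $b_i$, so the probability that coin $i$ wins is at most $x_i/(1+x_i)\leq 1/2$. Therefore the probability that coin $i$ wins does not exceed the probability that coin $j$ wins, contradicting $i\rightarrow j$; this forces $a_i\geq a_j$.

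I do not expect a genuine obstacle. In fact the statement is the mirror image of Lemma~\ref{lem:bdec} under the involution that replaces the type $(a,b,x)$ by $(-b,-a,1/x)$: this involution negates all displayed values, hence reverses every dominance edge, and it turns winner coins into loser coins and vice versa, so applying it to Lemma~\ref{lem:bdec} yields Lemma~\ref{lem:adec} directly. One could therefore either invoke this symmetry or repeat the short computation above; the only point worth a word of care in the direct version is the fair-coin boundary case $x_i=1$, where one uses that the condition defining $i\rightarrow j$ is a strict inequality, so that equality of the two winning probabilities already suffices for the contradiction.
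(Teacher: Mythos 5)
Your argument is exactly the ``completely analogous'' proof that the paper omits: assuming $a_i<a_j$, coin $i$ displays $a_i$ and hence loses with probability $1/(1+x_i)\geq 1/2$ (since $x_i\leq 1$), which rules out the strict inequality required for $i\rightarrow j$. This matches the paper's intended mirror of Lemma~\ref{lem:bdec}, and your remark about the involution $(a,b,x)\mapsto(-b,-a,1/x)$ is a correct, slightly slicker way to see the same thing.
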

The proof is completely analogous and omitted. Lemma~\ref{lem:adec} may
  be used to prove the following result.
\begin{theorem}
\label{thm:losers}
Assume a set of $n$ loser and fair coins is listed in increasing lexicographic
order of their types. If the domination graph is a tournament, it must be
semiacyclic. Conversely  every semiacyclic tournament is the domination
graph of a set of loser coins. 
\end{theorem}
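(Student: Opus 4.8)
The plan is to follow the proof of Theorem~\ref{thm:winners} almost verbatim, making the substitutions that swap the role of the top value $b_i$ of a coin with its bottom value $a_i$, that replace Lemma~\ref{lem:bdec} by Lemma~\ref{lem:adec} and the fourth line of Table~\ref{tab:dominance} by the second line, and that replace the last coordinate $x_i$ of a type by its reciprocal $1/x_i$.

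For the first direction, let the coins be loser or fair, listed in increasing lexicographic order, assume the domination graph is a tournament, and let $C=(i_1,\dots,i_k)$ be a cycle in it (indices read cyclically, $i_{k+1}=i_1$). Since no coin of $C$ is a winner, repeated use of Lemma~\ref{lem:adec} gives $a_{i_1}\ge a_{i_2}\ge\cdots\ge a_{i_k}\ge a_{i_1}$, hence $a_{i_1}=\cdots=a_{i_k}$. So two consecutive coins $i_j,i_{j+1}$ of $C$ have equal first coordinate, and the edge between them is governed either by the second line of Table~\ref{tab:dominance} (if their second coordinates differ) or by the first line (if they coincide). On an ascent, $i_j<i_{j+1}$, the first line would force the edge to point from the lexicographically larger coin to the smaller one, so it is impossible; the second line then gives $1/x_{i_{j+1}}>1/x_{i_j}+1$. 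On a descent, $i_{j+1}<i_j$, both lines give, after using the tournament hypothesis to make the inequality strict, $1/x_{i_{j+1}}>1/x_{i_j}-1$. Thus along $C$ the quantity $1/x$ changes by more than $1$ at each ascent and by more than $-1$ at each descent; since these increments sum to $0$ around the cycle, $\operatorname{asc}(C)<\operatorname{desc}(C)$, so $C$ is not ascending and the tournament is semiacyclic.

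For the converse, let $T$ be a semiacyclic tournament on $\{1,\dots,n\}$ and, using Proposition~\ref{prop:semiacyclic}, pick a point $(x_1,\dots,x_n)$ of the region of ${\mathcal L}_{n-1}$ associated to $T$, so that for $i<j$ we have $i\to j$ exactly when $x_i>x_j+1$. Fix a constant $c>1+\max_i x_i$ and let the $i$th coin have type $(0,\,i,\,x_i')$ with $x_i'=1/(c-x_i)$. Then each coin is a loser, since $0<x_i'<1$; the list is in increasing lexicographic order, since the second coordinates are $1,2,\dots,n$; and for $i<j$ the second line of Table~\ref{tab:dominance} applies, with $1/x_i'=c-x_i$, and gives $i\to j$ iff $c-x_j>(c-x_i)+1$, i.e.\ iff $x_i>x_j+1$, which is exactly the condition $i\to j$ in $T$. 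Hence this set of loser coins has dominance graph $T$. One may alternatively deduce the whole statement from Theorem~\ref{thm:winners} through the type-level involution $(a_i,b_i,x_i)\mapsto(-b_i,-a_i,1/x_i)$, which reverses every domination and interchanges winners with losers, combined with the relabeling $i\mapsto n+1-i$, which restores the lexicographic order and preserves semiacyclicity.

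I do not expect a genuine obstacle: the whole argument is a transcription of the proof of Theorem~\ref{thm:winners}. The one thing that needs care is the bookkeeping of inequality directions --- consistently working with $1/x$, recalling that a loser coin is one with $1/x>1$, and invoking the tournament hypothesis to pass to strict inequalities when negating a line of Table~\ref{tab:dominance} --- and the only genuinely new (though routine) point is the treatment of the degenerate case in which two consecutive coins of a cycle share both of their values, which the proof of Theorem~\ref{thm:winners} left implicit.
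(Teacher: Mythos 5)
Your proof is correct and takes essentially the same route as the paper's, which likewise transcribes the proof of Theorem~\ref{thm:winners} using Lemma~\ref{lem:adec} and the second line of Table~\ref{tab:dominance} for the forward direction, and realizes a semiacyclic $T$ by coins of type $(0,i,1/(r-x_i))$ for the converse. Your explicit treatment of the degenerate case where consecutive coins in a cycle share both face values, and the closing remark about the involution $(a,b,x)\mapsto(-b,-a,1/x)$, go slightly beyond what the paper writes out but do not change the argument.
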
  
\begin{proof}
  The proof is analogous to that of Theorem~\ref{thm:winners}. Consider
  first the dominance graph of a set of $n$ loser and fair coins and
  assume it is a tournament. We may use
 Lemma~\ref{lem:adec} to show that, in this dominance graph, all
 vertices contained in a cycle must have the same first coordinate. The
 role played by the fourth line of Table~\ref{tab:dominance} is taken
 over by the second line, which may be used to show that the reciprocal
 of the third coordinate of the type increases by more than $1$ after
 each ascent, and it decreases by less than $1$ after each
 descent. Again we obtain that the dominance graph can not contain an
 ascending cycle.

 Conversely, given a semiacyclic tournament $T$ on the
 vertex set $\{1,\ldots,n\}$, consider a point $(x_1,\ldots,x_n)$
 satisfying $x_1+\cdots+x_n=0$, such that  for each $i<j$ we have
 $i\rightarrow j$ if $x_i>x_j+1$ and we have $j\rightarrow i$ if
 $x_i<x_j+1$. Let us set $r:=\max\{ x_1+c, \ldots,
 x_n+c\}$ for some $c>1$, and let us define $x_i'$ by $x_i'=1/(r-x_i)$ for
 $i=1,\ldots,n$. Consider the set of $n$ coins where the $i$th coin has
 type $(0,i,x_i')$ for $i=1,\ldots,n$. Since $1/x_i'=r-x_i>1$ holds for
 all $i$, all coins are loser coins. Furthermore $1/x_j'>1/x_i'+1$ is
 equivalent to $r-x_j>r-x_i+1$, that is, $x_i>x_j+1$ for each
 $i<j$. Hence the dominance graph of this set of coins is $T$.  
\end{proof}  
In analogy to Corollary~\ref{cor:winners}, 
the proof of Theorem~\ref{thm:losers} also has the following consequence.
\begin{corollary}
\label{cor:losers}
If a set of coins does not contain winner coins and its dominance
graph is a tournament, then this tournament is also the dominance graph
of a set of loser coins that all have the same number on one of their
sides. 
\end{corollary}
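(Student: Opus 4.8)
The plan is to mirror the structure already used for Theorems~\ref{thm:winners} and~\ref{thm:losers}. The statement in question is Corollary~\ref{cor:losers}, which asserts that a set of coins without winner coins, whose dominance graph is a tournament, realizes the same tournament via a set of loser coins all sharing a common side value. I would obtain this directly from the proof of Theorem~\ref{thm:losers}, just as Corollary~\ref{cor:winners} was extracted from the proof of Theorem~\ref{thm:winners}.

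First I would observe that if a set of coins contains no winner coins, then every coin is either a loser coin or a fair coin, so the hypothesis of Theorem~\ref{thm:losers} is met. Hence the dominance graph, assumed to be a tournament, is semiacyclic. Next I would invoke the converse (constructive) direction of the proof of Theorem~\ref{thm:losers}: given that semiacyclic tournament $T$ on $\{1,\ldots,n\}$, that proof produces an explicit set of $n$ loser coins realizing $T$ as its dominance graph, where the $i$th coin has type $(0,i,x_i')$ with $x_i' = 1/(r-x_i)$ for a suitable choice of $r$ and an arbitrary point $(x_1,\ldots,x_n)$ in the corresponding region of $\mathcal{L}_{n-1}$. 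Each of these coins has the number $0$ written on one of its sides; since all of them are loser coins, this is exactly the conclusion of Corollary~\ref{cor:losers}.

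The write-up is essentially a one-line citation of the construction in the proof of Theorem~\ref{thm:losers}, pointing out that in that construction every coin has $0$ on its first side. I expect no real obstacle here; the only point requiring a moment's care is to note that ``no winner coins'' really does reduce to the hypothesis ``loser and fair coins'' of Theorem~\ref{thm:losers}, which is immediate from the trichotomy $x_i>1$, $x_i=1$, $x_i<1$ defining winner, fair, and loser coins. I would phrase the proof as follows:

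\begin{proof}
If a set of coins contains no winner coins, then by definition each coin satisfies $x_i\leq 1$, so it is either a loser or a fair coin. Assuming the dominance graph is a tournament, Theorem~\ref{thm:losers} implies it is semiacyclic. The converse part of the proof of Theorem~\ref{thm:losers} then produces a set of $n$ loser coins realizing this tournament as its dominance graph, in which the $i$th coin has type $(0,i,x_i')$ for $i=1,\ldots,n$. In particular, all of these coins have the number $0$ on one of their sides.
\end{proof}
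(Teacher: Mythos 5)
Your proposal is correct and matches the paper's intended argument exactly: the paper states that the corollary is a consequence of the proof of Theorem~\ref{thm:losers}, precisely because the constructive direction there produces loser coins of type $(0,i,x_i')$, all sharing the side value $0$. The only point worth keeping explicit (which you do handle) is that semiacyclicity in Theorem~\ref{thm:losers} is asserted after relisting the coins in increasing lexicographic order of types, so one passes through that relabeling before invoking the converse construction.
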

We conclude this section by an example of a tournament $T$ whose vertices
can not be labeled in an order that would make $T$ semiacyclic. As a
consequence, $T$ can not be the dominance graph of a set of coins that
does not contain winner, as well as loser coins. Our example will use a
direct product construction, which we will reuse later, thus we make a
separate definition.
\begin{definition}
  \label{def:ttimes}
Given two tournaments $T_1$ and $T_2$ on the vertex sets $V_1$ and $V_2$
respectively, we define their {\em direct product $T_1\times T_2$} as
follows. Its vertex set is $V_1\times V_2$ and we set $(u_1,u_2)\rightarrow
(v_1,v_2)$ if either $u_1\rightarrow v_1$ belongs to $T_1$ or we have
$u_1=v_1$ and $u_2\rightarrow v_2$ belongs to $T_2$.   
\end{definition}  
This direct product operation is not commutative, but it is associative
in the following sense: given $k$ tournaments $T_1, \ldots, T_k$ on
their respective vertex sets $V_1,\ldots, V_k$, the vertex set of
$T_1\times \cdots \times T_k$ (where parentheses may be inserted in any order)
is identifiable with the set of all $k$-tuples $(v_1,\ldots, v_k)$,
where, for each $i$, $v_i$ belongs to $V_i$. The edge
$(u_1,\ldots,u_k)\rightarrow (v_1,\ldots,v_k)$ belongs to
$T_1\times \cdots\times T_k$ exactly when $u_i\rightarrow v_i$ belongs
to $T_i$ for the least $i$ such that $u_i\neq v_i$.

\begin{proposition}
\label{prop:3x3}
Let $C_3$ denote the $3$-cycle. Then for any numbering of the vertex set
of $C_3\times C_3$, the resulting labeled tournament is not semiacyclic. 
\end{proposition}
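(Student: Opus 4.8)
The plan is to argue by contradiction: assume some numbering $\nu$ of the nine vertices of $C_3\times C_3$ makes the resulting tournament semiacyclic, and then exhibit an ascending cycle. Write the vertex set as $\{0,1,2\}\times\{0,1,2\}$, the first coordinate naming the ``row'' $R_i=\{i\}\times\{0,1,2\}$. From the definition of the direct product, each row carries the directed triangle $(i,0)\to(i,1)\to(i,2)\to(i,0)$, while all cross edges obey $R_0\to R_1$, $R_1\to R_2$, $R_2\to R_0$ (subscripts read mod $3$): every vertex of $R_i$ beats every vertex of $R_{i+1}$.

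First I would record a local fact about the rows. A directed triangle cannot have all three edges descending (that would force $a>b>c>a$ among the labels), so each row contains at least one ascent; and a triangle with two or three ascents is an ascending cycle, which is forbidden. Hence, under a semiacyclic $\nu$, each row $R_i$ has exactly one ascending edge, which I write as $(i,p_i)\to(i,p_i+1)$ for a suitable $p_i\in\{0,1,2\}$; call it $e_i$, so that $e_i$ is an ascent and the other two edges of the $i$-th triangle are descents.

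The heart of the argument is to splice each $e_i$ into a four-cycle that sweeps through all three rows. For every $i$ and every $\beta,\gamma\in\{0,1,2\}$, the quadruple
\[(i,p_i+1)\ \to\ (i+1,\beta)\ \to\ (i+2,\gamma)\ \to\ (i,p_i)\ \to\ (i,p_i+1)\]
is a directed $4$-cycle on four distinct vertices: its first three edges are cross edges in the legal direction $R_i\to R_{i+1}\to R_{i+2}\to R_i$, and its last edge is exactly $e_i$. Since a semiacyclic tournament has no ascending $4$-cycle, and a $4$-cycle with two or more ascents is ascending, this cycle has at most one ascent; as $e_i$ already supplies one, the remaining three edges must all be descents — for every choice of $\beta$ and $\gamma$. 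Looking at the middle edge, this says $\nu(i+1,\beta)>\nu(i+2,\gamma)$ for all $\beta,\gamma$, i.e. every label of $R_{i+1}$ exceeds every label of $R_{i+2}$. Running $i=0,1,2$ yields: all of $R_1$ above all of $R_2$, all of $R_2$ above all of $R_0$, all of $R_0$ above all of $R_1$ — an impossible cyclic chain of inequalities. This contradiction proves the proposition.

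The one place where I expect to have to think is the choice of auxiliary cycles. The ``obvious'' candidates, the transversal triangles $(0,a)\to(1,b)\to(2,c)\to(0,a)$, each force only one cross edge to be a descent and do not close the argument; the point is to realize that grafting a row's \emph{unique} ascending edge onto a cross $4$-cycle upgrades the rigid within-row structure into a global comparison between two whole rows. Everything after that identification is routine, and — pleasantly — the proof needs only the bare definition of semiacyclicity, not the coin or Linial-arrangement realizations of the earlier sections.
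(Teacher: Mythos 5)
Your proof is correct, and every step checks out: each row triangle must contain exactly one ascent (three descents would give a cyclic chain of strict inequalities, two or more ascents would give an ascending $3$-cycle), the spliced quadruple $(i,p_i+1)\to(i+1,\beta)\to(i+2,\gamma)\to(i,p_i)\to(i,p_i+1)$ really is a directed $4$-cycle on four distinct vertices of $C_3\times C_3$, and a $4$-cycle may carry at most one ascent in a semiacyclic tournament, which forces every label of $R_{i+1}$ to exceed every label of $R_{i+2}$ and hence yields an impossible cyclic domination among the three rows. The route is genuinely different from the paper's. The paper first normalizes the labeling: for each row $u$ the cyclic permutation of $\{u0,u1,u2\}$ is an automorphism of $C_3\times C_3$, so one may assume $u0$ carries the least label in its row, making each $u0\to u1$ an ascent; it then exhibits the single $6$-cycle $(00,01,10,11,20,21)$, which has at least $3$ ascents among its $6$ edges and is therefore ascending. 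That argument is shorter but leans on verifying the automorphism and on a well-chosen explicit cycle. Your argument needs no symmetry reduction and no lucky choice of cycle; in exchange it extracts more structure (the forced uniqueness of the row ascent and the forced total ordering of one row's labels over another's), which is the kind of information that could be reusable if one wanted to analyze products $T_1\times T_2$ more generally, in the spirit of Theorem~\ref{thm:prod}. Both proofs use only the bare definition of semiacyclicity.
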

\begin{proof}
  Let us identify the vertices of $C_3$ with $0$, $1$ and $2$ in the order
  that $0\rightarrow 1$, $1\rightarrow 2$, $2\rightarrow 0$ belong to
  $C_3$. The vertex set of $C_3\times C_3$ is then the set of ternary
  strings of length $2$. Assume, by way of contradiction, that there is
  a labeling of these ternary strings that makes $C_3\times C_3$ a
  semiacyclic tournament.

  For each $u\in \{0,1,2\}$ the cyclic permutation 
$(u0,u1,u2)$ is an automorphism of the tournament $(C_3\times
  C_3)$. Hence, without loss of generality, we may assume that in the
  set $\{u0,u1,u2\}$ the vertex $u0$ has the least label. As a
  consequence, the edge $u0\rightarrow u1$ is an ascent for each
  $u$. The cycle $(00, 01, 10, 11, 20, 21)$ has $6$ edges, and contains
  at least $3$ ascents, thus it is an ascending cycle, in contradiction
  of having a semiacyclic tournament.
\end{proof}

\section{General sets of coins}
\label{sec:gen}

We begin with showing that the tournament $C_3\times C_3$ introduced in 
Proposition~\ref{prop:3x3} can be represented with a set of coins, if we allow
both winner and loser coins.
\begin{example}
\label{ex:33rep}
We represent the vertices of $C_3\times C_3$ with the set
of coins given in Table~\ref{tab:33rep}.
\begin{table}[h]
\begin{tabular}{|l||c|c|c|}
\hline
vertex & $00$&$01$&$02$\\
coin $(a,b,x)$ &  $(3,6,1+\delta)$&   $(2,6,3/2)$& $(1,6,2+2\delta)$\\
\hline
vertex &   $20$& $21$& $22$\\
coin $(a,b,x)$&  $(5,12,1/(s+1+2\varepsilon))$&    $(5,11,1/(s+1/2)))$&
    $(5,10,1/(s+\varepsilon))$\\   
\hline
vertex & $10$& $11$ & $12$\\
coin $(a,b,x)$&   $(4,9,1/(r+1+2\varepsilon))$& $(4,8,1/(r+1/2))$&
  $(4,7,1/(r+\varepsilon))$\\
\hline
\end{tabular}
\caption{A coin-representation of the vertices of $C_3\times C_3$}
\label{tab:33rep}
\end{table}

Here $r$, $s$, $\varepsilon$ and $\delta$ are positive real numbers, whose
values we determine below. Initially we only require $r>1$ and $s>1$ which makes
all coins loser except for the ones representing $(0,0)$, $(0,1)$ and
$(0,2)$. 

To assure $00\rightarrow 01\rightarrow 02\rightarrow 00$ we need
$$2+2\delta < 3/2+1,\quad 3/2 <1+\delta +1,\quad \mbox{and}\quad
2+2\delta> 1+\delta +1.$$
All three are satisfied if and only if we have
\begin{equation}
  \label{eq:delta}
0<\delta<1/4.
\end{equation}  
To assure $20\rightarrow 21\rightarrow 22\rightarrow 20$ we need
$$r+1/2< r+\varepsilon+1,\quad r+1+2\varepsilon < r+1/2+1,\quad \mbox{and}\quad
r+1+2\varepsilon> r+\varepsilon +1.$$
All three are satisfied if and only if we have
\begin{equation}
  \label{eq:eps}
0<\varepsilon<1/4.
\end{equation}  
Note that condition \eqref{eq:eps} is independent of the value of $r$,
and we may replace $r$ with $s$ in the above calculations. Therefore to
assure $10\rightarrow 11\rightarrow 12\rightarrow 10$ we also need to
make sure \eqref{eq:eps}, and this is a sufficient condition.  

Note next, that any coin $(4,b_1,x_1)$ representing a vertex of the
form $2v_1$ is dominated by any coin $(5,b_2,x_2)$ representing a vertex of
the form $1v_2$. Indeed, with probability $1/(1+x_1)>1/2$ the first
coin displays $4$ and loses.

Next we make sure that any coin $(4,b_1,x_1)$ representing a vertex of
the form $2v_1$ dominates  any coin $(a_2,6,x_2)$ representing a vertex
of the form $0v_2$. We have $a_2<4<6<b_1$ and, using line 5 of
Table~\ref{tab:dominance} we get that
$$
(1+1/x_2)(1+x_1)>2
$$
needs to be satisfied. Substituting the least value o $x_1$ and the
largest value of $x_2$ we get
$$
\left(1+\frac{1}{2+2\delta}\right)\left(1+\frac{1}{r+1+2\varepsilon}\right)>2
$$
This inequality is equivalent to
\begin{equation}
  \label{eq:r}
r<\frac{2}{1+\delta}-2\varepsilon. 
\end{equation}
Finally, we want to make sure that any coin $(5,b_1,x_1)$ representing a
vertex of the form $2v_1$ is dominated by any coin $(a_2,6,x_2)$
representing a vertex of the form $0v_2$.
We have $a_2<5<6<b_1$ and, using line 5 of
Table~\ref{tab:dominance} we get that
$$
(1+1/x_2)(1+x_1)<2
$$
needs to be satisfied. Substituting the largest value o $x_1$ and the
least value of $x_2$ we get
$$
\left(1+\frac{1}{1+\delta}\right)\left(1+\frac{1}{s+\varepsilon}\right)<2.
$$
This inequality is equivalent to
\begin{equation}
  \label{eq:s}
s>\frac{2+\delta}{\delta}-\varepsilon. 
\end{equation}
The conditions \eqref{eq:delta}, \eqref{eq:eps}, \eqref{eq:r} and
\eqref{eq:s} may be simultaneously satisfied, by setting $\delta=0.1$,
$\varepsilon=0.1$, $r=1.6$ and $s=22$, for example.   
\end{example}  

Example~\ref{ex:33rep} proves that among the dominance graphs of systems
of coins there are some that can not be labeled to become semiacyclic
tournaments. On the other hand, Theorems~\ref{thm:winners} and \ref{thm:losers}
imply an important necessary condition for a tournament to be the
dominance graph of a system of coins.
\begin{corollary}
\label{cor:winlose}
If a tournament $T$ may be represented as the dominance graph of a
system of coins, then its vertex set $V$ may be written as a union
$V=V_1\cup V_2$, such that the full subgraphs induced by $V_1$  and
$V_2$, respectively, may be labeled to become semiacyclic tournaments. 
\end{corollary}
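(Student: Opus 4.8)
The plan is to deduce Corollary~\ref{cor:winlose} from the structural results already in hand, namely Theorems~\ref{thm:winners} and~\ref{thm:losers}, by splitting the coins of a representing system according to whether each coin is a winner (or fair) coin or a loser coin. Suppose $T$ is the dominance graph of a system of $n$ coins with types $(a_i,b_i,x_i)$; after applying Lemma~\ref{lem:2values} we may assume $a_i<b_i$ for every coin, so that each coin is unambiguously a winner, a loser, or a fair coin. Let $V_1$ be the set of vertices whose coins satisfy $x_i\ge 1$ (the winner and fair coins) and let $V_2$ be the set of vertices whose coins satisfy $x_i\le 1$ (the loser and fair coins); a fair coin may be placed in either class, so certainly $V=V_1\cup V_2$.

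Next I would observe that the full subgraph of $T$ induced on $V_1$ is exactly the dominance graph of the sub-system consisting of the coins indexed by $V_1$. This is immediate from the definition of the dominance relation: whether coin $i$ dominates coin $j$ depends only on the types of coins $i$ and $j$, so deleting the other coins does not change any edge among the surviving ones. The sub-system on $V_1$ consists of winner and fair coins only and its dominance graph is a tournament (being a full subgraph of the tournament $T$), so Theorem~\ref{thm:winners} applies: after listing these coins in increasing lexicographic order of their types, the induced tournament is semiacyclic. In other words, the vertices of $V_1$ can be labeled so that the induced subtournament is semiacyclic. Symmetrically, the sub-system on $V_2$ consists of loser and fair coins only, so Theorem~\ref{thm:losers} gives a labeling of $V_2$ making the induced subtournament semiacyclic. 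This yields the desired decomposition $V=V_1\cup V_2$.

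I do not anticipate a genuine obstacle here; the corollary is essentially a bookkeeping consequence of the two theorems. The one point that deserves a word of care is the handling of fair coins ($x_i=1$), which belong to neither the strict winner class nor the strict loser class but are permitted in both Theorem~\ref{thm:winners} and Theorem~\ref{thm:losers}; assigning each fair coin to $V_1$ (say) and invoking Theorem~\ref{thm:winners} for that part disposes of this cleanly. It is also worth remarking explicitly that a full subgraph of a tournament is again a tournament and that the dominance relation restricts correctly to a sub-system of coins, since these two facts are exactly what licenses the application of the theorems to $V_1$ and $V_2$. With those remarks in place the proof is just the two-line argument above.
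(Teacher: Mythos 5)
Your proposal is correct and follows exactly the paper's own argument: split the coins into the winner-and-fair class $V_1$ and the loser-and-fair class $V_2$, note that the dominance relation restricts to each sub-system, and apply Theorems~\ref{thm:winners} and~\ref{thm:losers} respectively. The extra care you take with fair coins and with the restriction of the dominance graph to a sub-system is sound and only makes explicit what the paper leaves implicit.
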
  
Indeed, given a system of coins whose dominance graph is $T$, we may
choose $V_1$ to be the set of winner and fair coins and $V_2$ to be the
set of loser and fair coins (we do not have to include the fair coins on
both sets) and then apply Theorems~\ref{thm:winners} and
\ref{thm:losers}.   

We conclude this section with an example of a tournament that can not be
the dominance graph of any system of coins. Our example is $C_3\times
C_3\times C_3\times C_3$. The fact this tournament is not the dominance
graph of any system of coins, is a direct consequence of
Proposition~\ref{prop:3x3} and the next theorem. 

\begin{theorem}
\label{thm:prod}
Suppose the tournaments $T_1$ and $T_2$ have the property that they are
not semiacyclic for any ordering of their vertex sets. Then the
tournament $T_1\times T_2$ can not be the dominance graph of any system
of coins.
\end{theorem}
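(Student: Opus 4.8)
The plan is to argue by contraposition: assuming $T_1 \times T_2$ is the dominance graph of some system of coins, I will produce an ordering of the vertices of one of $T_1$ or $T_2$ making it semiacyclic, contradicting the hypothesis. The key structural observation is that the vertex set of $T_1 \times T_2$ decomposes into ``blocks'' $\{v_1\} \times V_2$ indexed by $v_1 \in V_1$; within a single block the induced subtournament is a copy of $T_2$, while between two distinct blocks all edges point the same way (the direction determined by $T_1$ on the corresponding vertices of $V_1$). So $T_1 \times T_2$ contains many disjoint copies of $T_2$ and also, by picking one vertex from each block, many copies of $T_1$.

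First I would invoke Corollary~\ref{cor:winlose}: if $T_1 \times T_2$ is a dominance graph of coins, its vertex set splits as $W \cup L$ where the subtournament on $W$ (the winner/fair coins) and the subtournament on $L$ (the loser/fair coins) can each be ordered to become semiacyclic. Since neither $T_1$ nor $T_2$ is semiacyclic under any ordering, and both the copies of $T_1$ and the copies of $T_2$ sitting inside $T_1\times T_2$ are induced subtournaments, neither $W$ nor $L$ can contain a whole copy of $T_1$ or a whole copy of $T_2$ (an induced subtournament of a semiacyclic-orderable tournament is semiacyclic-orderable, since the restriction of a good ordering works). The goal is to show this is impossible by a pigeonhole/counting argument on the blocks.

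Here is the combinatorial heart. For each block $B_{v_1} = \{v_1\}\times V_2$, since $W$ cannot contain all of $B_{v_1}$ (that would be a copy of $T_2$), at least one vertex of $B_{v_1}$ lies in $L$; symmetrically at least one lies in $W$. Call a block \emph{$W$-marked} if it contains a vertex of $W$ and \emph{$L$-marked} if it contains a vertex of $L$; every block is both. Now pick, for each $v_1$, a vertex $w(v_1) \in B_{v_1} \cap W$; the set $\{w(v_1) : v_1 \in V_1\}$ induces a copy of $T_1$ inside $T_1\times T_2$, and it lies entirely in $W$ --- contradiction, since $T_1$ is not semiacyclic-orderable. (The same argument with $L$ in place of $W$ also works; either gives the contradiction.) The point requiring care is that the transversal $\{w(v_1)\}$ really does induce a copy of $T_1$: for $u_1 \neq v_1$ the edge between $w(u_1)$ and $w(v_1)$ is governed by the first coordinate where they differ, which is the first coordinate, so it matches the $T_1$-edge between $u_1$ and $v_1$ --- this is exactly the associativity/edge description of the direct product recorded after Definition~\ref{def:ttimes}.

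The main obstacle is not any single step but making sure the logical chain is airtight: specifically that ``being orderable to a semiacyclic tournament'' is inherited by induced subtournaments (needed so that a copy of $T_1$ or $T_2$ inside $W$ or $L$ yields a genuine contradiction), and that the copies of $T_1$ and $T_2$ I exhibit inside $T_1 \times T_2$ are \emph{induced} subgraphs, not merely subgraphs. Both follow immediately from the definitions, but they must be stated. Once these are in place, the proof is short: Corollary~\ref{cor:winlose} forces a $2$-coloring of the vertices of $T_1 \times T_2$ with each color class semiacyclic-orderable; each of the $|V_1|$ blocks, being a copy of $T_2$, must meet \emph{both} color classes; choosing one vertex of a fixed color from each block yields a monochromatic induced copy of $T_1$, which is semiacyclic-orderable --- contradicting the hypothesis on $T_1$.
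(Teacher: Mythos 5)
Your proposal is correct and follows essentially the same route as the paper's own proof: invoke Corollary~\ref{cor:winlose} to split the vertex set into two semiacyclic-orderable parts, observe that each block $\{v_1\}\times V_2$ is an induced copy of $T_2$ and hence must meet the ``winner'' part, and then note that a transversal of such vertices induces a copy of $T_1$ inside that part, giving the contradiction. The two points you flag as needing care (heredity of semiacyclic-orderability under induced subtournaments, and that the transversal is an induced copy of $T_1$) are exactly the points the paper's proof relies on as well.
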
  
\begin{proof}
Let us denote the vertex set of $T_1$ and $T_2$, respectively, by $V_1$
and $V_2$, respectively. Assume, by way of contradiction, that
$T_1\times T_2$ is the dominance graph of a system of coins. As noted in
Corollary~\ref{cor:winlose}, we may then write $V_1\times V_2$ as a
union $V_1\times V_2=W_1\cup W_2$ such that the restriction of
$T_1\times T_2$ to either of $W_1$ or $W_2$ can be labeled to become a
semiacyclic tournament. For a fixed $v_1\in V_1$, the restriction of
$T_1\times T_2$ to the set $\{(v_1,v_2)\::\: v_2\in V_2\}$ is isomorphic
to $T_2$. Indeed, the first coordinate is the same for all vertices in
the set and identifying each $(v_1,v_2)$ with $v_2$ yields an
isomorphism. Since $T_2$ can not be ordered to be semiacyclic, we obtain
that $\{(v_1,v_2)\::\: v_2\in V_2\}$ can not be entirely contained in
$W_2$. As a consequence, for each $v_1\in V_1$ we can pick an $f(v_1)\in
V_2$ such that $(v_1,f(v_1))$ belongs to $W_1$. Consider now the
restriction of $T_1\times T_2$ to the set $\{(v_1,f(v_1))\::\: v_1\in
V_1\}$. This restriction is isomorphic to $T_1$, an isomorphism is given
by $(v_1,f(v_1))\mapsto v_1$. Since $T_1$ can not be labeled to become a
semiacyclic tournament, neither can the restriction of $T_1\times T_2$
to the set $W_1$ that properly contains $\{(v_1,f(v_1))\::\: v_1\in
V_1\}$. We obtained a contradiction.      
\end{proof}  

\section{Concluding remarks}
\label{sec:conc}

The problem of representing tournaments with sets of dice is closely
related to representing tournaments by {\em voting preference
  patterns}. In the latter setup the $n$ vertices of the tournament
correspond to $n$ candidates. There are $m$ voters, and each has a
linearly ordered preference list of all candidates. Candidate $i$
defeats candidate $j$ if the majority of voters prefers $i$ over $j$.
Equivalently we may instruct voter $i$ to assign the ``score''
$(i-1)\cdot n+ n+1-k$ to the the $k$th candidate on their preference
list.
We may then associate an $m$-sided fair die to each candidate in such
a way that each face corresponds to a voter and is marked by the score
the voter assigned to the candidate. The dominance graph of this set of
dice is identical with the tournament of the voting preference
patterns. It has been shown by McGarvey~\cite{McGarvey} that every
tournament on $n$ vertices can be represented as a preference pattern of
$n$ candidates and $n(n-1)$ voters. A lower bound of $0.55 n/\log(n)$
on the minimally necessary number of voters to be able to represent all
tournaments on $n$ vertices was given by
Stearns~\cite{Stearns}. Erd\H os and Moser~\cite{Erdos-Moser} have
shown that any tournament on $n$ vertices may be also realized as a
preference pattern of $O(n/\log(n))$ voters. As a consequence, any
tournament may be represented as the dominance graph of a set of dice
with $O(n/\log(n))$ faces. However, not all sets of dice need to arise in
connection with voting preference patterns, and Bednay and
Boz\'oki~\cite{Bednay-Bozoki} showed that every tournament on $n$
vertices can be realized with a set of dice such that each die has
$\lfloor 6n/5\rfloor $ faces. Our paper shows that $2$ faces do not
suffice, even if we allow the ``two-sided dice'' (better known as coins)
to be unfair, and even if we allow ``ties'' between two coins and only
require the dominating coin to display the larger number with a greater
probability. On the other hand, our research indicates that describing
the classes of tournaments that may be represented by sets of dice with
a fixed number of sides could be an interesting question. 

Theorem~\ref{thm:winners} motivates the question, how to describe those
tournaments that can not be made semiacyclic, no matter how we order
their vertices. Postnikov and Stanley have given a list of five
cycles~\cite[Theorem 8.6]{Postnikov-Stanley}, one of which must appear
if the tournament (with numbered vertices) is not semiacyclic. Our
question is different here, as we are allowed to choose our own
numbering on the vertices.
\begin{conjecture}
There is a finite list of tournaments that are minimally not semiacyclic
in the sense, that removing any of their vertices allows the numbering
of the remaining vertices in a semiacyclic way.
\end{conjecture}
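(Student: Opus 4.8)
Since the final statement is a conjecture, what follows is a line of attack rather than a finished argument. Write $\mathcal G$ for the class of tournaments that admit a numbering of their vertices making them semiacyclic. The first point is that $\mathcal G$ is hereditary under induced subtournaments: if $(T,\sigma)$ is semiacyclic and $S$ is induced on $W\subseteq V(T)$, relabel $W$ by the order isomorphism $W\to\{1,\dots,|W|\}$ determined by $\sigma$; this relabeling preserves, for every pair of vertices of $W$, which is the smaller, hence preserves ascents and descents of each edge and therefore $\operatorname{asc}$ and $\operatorname{desc}$ of each cycle, so $S$ inherits a semiacyclic numbering. Thus the conjecture is exactly the assertion that $\mathcal G$ has finitely many minimal forbidden induced subtournaments, equivalently --- since those automatically form an antichain under the induced-subtournament order --- that this antichain is finite. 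Because the induced-subtournament order on all tournaments is not a well-quasi-order (it has infinite antichains; otherwise every hereditary class of tournaments, including $\mathcal G$, would trivially have a finite obstruction list), a generic well-quasi-ordering theorem will not close the gap: the combinatorics of semiacyclicity must be used.

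The main tool I would bring in is the Postnikov--Stanley obstruction list \cite[Theorem 8.6]{Postnikov-Stanley}: there is an absolute constant $B$ and a finite list of numbered cycles on at most $B$ vertices such that a numbered tournament fails to be semiacyclic if and only if it contains one of them with matching relative order. Hence a tournament $T$ lies outside $\mathcal G$ precisely when every bijection $\sigma\colon V(T)\to\{1,\dots,|V(T)|\}$ exhibits one of these configurations on some $\le B$-element subset. I would try to convert this into a size bound on minimal obstructions by a compactness/Ramsey argument: assign to each ordered $B$-tuple of vertices its isomorphism type together with the set of linear orders on it that are ``forbidden'', and argue that if a minimal obstruction $T$ were large enough, a Ramsey homogenization on $B$-subsets would either expose a strictly smaller induced subtournament still outside $\mathcal G$ (contradicting minimality once $|V(T)|$ exceeds the Ramsey bound) or produce enough regularity among the local constraints to build a semiacyclic numbering of $T$ outright (contradicting $T\notin\mathcal G$).

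A complementary ingredient is modular decomposition. The direct product $T_1\times T_2$ of Definition~\ref{def:ttimes} is precisely the substitution of a copy of $T_2$ into each vertex of $T_1$, and Proposition~\ref{prop:3x3} shows $\mathcal G$ is not closed under substitution, so it is worth determining exactly how membership of $T_0[T_1,\dots,T_k]$ in $\mathcal G$ depends on the pieces. Using the slack formulation behind Proposition~\ref{prop:semiacyclic} --- a numbered tournament is semiacyclic iff there exist reals $x_v$ with $x_u-x_v>1$ across every ascent and $x_u-x_v>-1$ across every descent --- one can look for semiacyclic numberings of a substitution in ``blockwise'' form, reducing to semiacyclic numberings of $T_0$ and of the non-singleton $T_i$ that are mutually compatible in a precise quantitative way. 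This should let one restrict attention to bounding the prime (indecomposable) minimal obstructions, where the rigidity of indecomposable tournaments together with the bounded size of the Postnikov--Stanley configurations is the best hope of forcing a bound.

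The step I expect to be the real obstacle is the local-to-global passage. Lying in $\mathcal G$ is a property of one global numbering, and there is no a priori reason it is witnessed by bounded-size induced subtournaments; moreover the Postnikov--Stanley configurations constrain the relative order of their vertices, so distinct $B$-subsets interact through shared vertices and shared order constraints. Excluding an infinite antichain of minimal obstructions then amounts to showing such order constraints cannot be propagated along arbitrarily long ``gadgets'' that are bad only in aggregate --- exactly the mechanism that produces infinite antichains in many other hereditary-class settings. I would be least confident about this step and suspect it needs a genuinely new structural fact about semiacyclic numberings. A safer first milestone, in keeping with the paper's own emphasis, is to confirm that $C_3\times C_3$ is itself a minimal obstruction and to enumerate by computer all minimal obstructions on a modest number of vertices, which would at least pin down the conjectured list and test its plausibility.
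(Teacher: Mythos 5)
The statement you were asked about is not proved in the paper: it is stated as an open conjecture in the concluding section, and the author offers no argument for it. Your submission, as you acknowledge, is likewise a programme rather than a proof, so there is no proof on either side to compare. That said, the parts of your write-up that are actual mathematics are sound: the class ${\mathcal G}$ of tournaments admitting a semiacyclic numbering is indeed hereditary under induced subtournaments (restricting a numbering preserves the relative order of every pair, hence the ascent/descent status of every edge of every cycle), so the conjecture is correctly rephrased as finiteness of the antichain of minimal forbidden induced subtournaments; your slack formulation ($x_u-x_v>1$ across ascents, $x_u-x_v>-1$ across descents) matches the region description behind Proposition~\ref{prop:semiacyclic}; and the Postnikov--Stanley list from \cite[Theorem 8.6]{Postnikov-Stanley} does reduce semiacyclicity of a \emph{fixed} numbering to a bounded-size local condition. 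One caveat: your parenthetical justification that the induced-subtournament order is not a well-quasi-order (``otherwise every hereditary class would trivially have a finite obstruction list'') is not an argument --- it only observes that wqo-ness would make the conjecture trivial; to rule out the generic route you should cite an actual infinite antichain of tournaments under embeddability, which is known to exist.

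The genuine gap is exactly the one you flag yourself: the passage from ``every numbering of $T$ exhibits a bounded forbidden configuration somewhere'' to ``some bounded induced subtournament of $T$ already fails for every numbering of itself.'' Nothing in the Ramsey/compactness sketch as written accomplishes this; homogenizing $B$-subsets by isomorphism type discards the order constraints that couple overlapping $B$-subsets through shared vertices, and it is precisely those couplings that could in principle be propagated along arbitrarily long gadgets that are bad only in aggregate, which is the standard mechanism producing infinite antichains of minimal obstructions in other hereditary settings. Until you either prove a structural statement about semiacyclic numberings that controls such propagation, or carry out the substitution-decomposition reduction (note that Definition~\ref{def:ttimes} is substitution, and Proposition~\ref{prop:3x3} shows ${\mathcal G}$ is not closed under it) and bound the prime minimal obstructions, the conjecture is untouched. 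The concrete first steps you propose --- checking whether $C_3\times C_3$ is a \emph{minimal} obstruction and enumerating small minimal obstructions by computer --- are sensible, but they are evidence-gathering, not proof.
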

We wish to make the analogous conjecture for dominance graphs of systems 
of coins. 
\begin{conjecture}
\label{conj:gen}
There is a finite list of tournaments that are minimally not dominance 
graphs of systems of coins in the sense, that after removing any of their
vertices we obtain the dominance graph of a system of coins.
\end{conjecture}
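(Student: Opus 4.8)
\medskip\noindent
Since a coin may simply be deleted, the class $\mathcal C$ of tournaments that are dominance graphs of systems of coins is closed under passing to induced subtournaments, so the content of Conjecture~\ref{conj:gen} is that $\mathcal C$ has only finitely many minimal forbidden induced subtournaments. The plan is to reduce this to the analogous statement for the class $\mathcal S$ of tournaments that become semiacyclic under \emph{some} numbering (the subject of the preceding conjecture), by sharpening Corollary~\ref{cor:winlose}. First I would try to promote Corollary~\ref{cor:winlose} to an exact characterization: $T\in\mathcal C$ if and only if $V(T)$ admits a partition $V(T)=V_1\sqcup V_2$, a numbering of each of $V_1,V_2$ making it semiacyclic, and a choice of ``interval pattern'' for the label pairs $(a_i,b_i)$ consistent with lines~2--5 of Table~\ref{tab:dominance}, such that the resulting system of inequalities in the parameters $x_i$ (for the winner side, $x_i>1$) and $1/x_j$ (for the loser side, $1/x_j>1$) has a solution; here $V_1$ plays the role of the winner coins and $V_2$ that of the loser coins, and, as in the proofs of Theorems~\ref{thm:winners} and~\ref{thm:losers}, Lemmas~\ref{lem:bdec} and~\ref{lem:adec} constrain the $b$-values within $V_1$ and the $a$-values within $V_2$, which is what keeps the combinatorial data finite in size. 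The first technical step is then a realizability lemma: given such combinatorial data, produce actual types $(a_i,b_i,x_i)$. The computations in the proofs of Theorems~\ref{thm:winners} and~\ref{thm:losers}, and in Example~\ref{ex:33rep}, suggest that this amounts to exhibiting a point of an explicitly described semialgebraic region and that the region is nonempty as soon as the obvious necessary inequalities hold.

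Granting such a characterization, membership of $T$ in $\mathcal C$ is certified by a bounded amount of data: two semiacyclic numberings together with a finite interval pattern. Conjecture~\ref{conj:gen} would then follow from two finiteness facts: (i) the preceding conjecture, that $\mathcal S$ has finitely many minimal forbidden induced subtournaments; and (ii) that the compatibility bookkeeping between $V_1$ and $V_2$ adds only boundedly much, so that a tournament all of whose sufficiently small induced subtournaments lie in $\mathcal C$ itself lies in $\mathcal C$. For (i) the natural starting point is the Postnikov--Stanley list of five forbidden cycles \cite[Theorem~8.6]{Postnikov-Stanley}: a numbered tournament fails to be semiacyclic precisely when it contains one of these, so $T\notin\mathcal S$ means that every numbering of $T$ creates one of the five. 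One would then try to show, in the spirit of the pigeonhole argument of Proposition~\ref{prop:3x3}, that a minimally-non-semiacyclic tournament cannot be large, using the fact that for a fixed numbering the defining inequalities (each pair satisfies $x_i-x_j>1$ or $x_i-x_j<1$) pin the vertices to an essentially one-dimensional staircase, a rigid constraint that a large tournament should not be able to violate in a minimal way.

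The main obstacle is exactly step (i), and it is not a formality: induced-subtournament containment need not be a well-quasi-order on the class of all tournaments, so the finiteness of the obstruction set for $\mathcal S$ (and hence, through the reduction above, for $\mathcal C$) is a genuine structural statement and cannot simply be harvested from general well-quasi-ordering theory. If $\mathcal S$ should turn out not to have a finite obstruction set, one would expect the same of $\mathcal C$, and the iterated products $C_3\times C_3\times\cdots\times C_3$ of Theorem~\ref{thm:prod} are the first place to look for an infinite antichain; conversely, a proof of the preceding conjecture, together with the realizability lemma, would essentially settle Conjecture~\ref{conj:gen}.
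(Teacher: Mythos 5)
The statement you are addressing is labeled a \emph{conjecture} in the paper, and the paper offers no proof of it: it is posed as an open problem, with only a remark that studying hypersurface arrangements generalizing the equations in Table~\ref{tab:dominance} might eventually help. So there is no proof in the paper for your argument to be compared against, and your proposal, read on its own terms, is a research plan rather than a proof; each of its load-bearing steps is genuinely open. First, you propose to ``promote'' Corollary~\ref{cor:winlose} to an exact characterization of dominance graphs of coin systems, but the paper establishes only the \emph{necessity} of the partition condition. Example~\ref{ex:33rep} shows that realizing even a single nine-vertex tournament requires a delicate choice of interval pattern and of the parameters $r$, $s$, $\varepsilon$, $\delta$, and nothing in the paper suggests that every partition into two semiacyclic-able parts, with some consistent interval pattern, yields a solvable system of inequalities --- indeed Theorem~\ref{thm:prod} exhibits a tournament that is not realizable at all, so the necessary condition of Corollary~\ref{cor:winlose} is certainly not the whole story and your ``realizability lemma'' is a substantive unproved claim. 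Second, your step (i) is precisely the paper's \emph{preceding} conjecture, which is equally open; reducing one open conjecture to another is a legitimate observation but proves neither. Third, step (ii) (``the compatibility bookkeeping adds only boundedly much'') is asserted without a mechanism; the interaction between the winner part and the loser part is exactly where Example~\ref{ex:33rep} has to work hardest, and there is no argument that this interaction is detectable on bounded-size induced subtournaments.

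On the positive side, you correctly observe that the class of realizable tournaments is closed under taking induced subtournaments (delete the corresponding coin), so the conjecture is genuinely a finite-obstruction-set statement; you correctly note that induced-subtournament containment is not known to be a well-quasi-order, so no general theory hands you the finiteness; and pointing at the iterated products $C_3\times C_3\times\cdots\times C_3$ from Theorem~\ref{thm:prod} as a candidate infinite antichain is a sensible test of the conjecture. But none of this constitutes a proof: the conjecture remains open both in the paper and in your proposal, and you should present your text as a discussion of strategy, not as a proof.
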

Proving Conjecture~\ref{conj:gen} may be helped by looking at
generalizations of hyperplane arrangements to hypersurface arrangements,
that allow adding equations of hypersurfaces of the form 
$1/x_j = 1/x_i +1$ and of the form $(1/x_i +1)(x_j+1) = 2$, see
Table~\ref{tab:dominance}.  Such a study would only allow finding an
analogue of semiacyclic tournaments (with numbered vertices), the
question would still be open which subgraphs would obstruct numbering
the vertices in a way that allows them to be represented in the desired
way. 

The number of regions in the Linial arrangement is listed as sequence
A007889 in the OEIS~\cite{OEIS}. These numbers count alternating trees
and binary search trees as well. Several ways are known to find these
numbers, see ~\cite{Athanasiadis-extLin,Postnikov,Postnikov-Stanley}
neither of which seems to be related to counting semiacyclic tournaments
directly. In view of the role played by these tournaments, it may be
interesting to find a way to count them directly.  

\section*{Acknowledgments}
This work was partially supported by a grant from the Simons Foundation
(\#245153 to G\'abor Hetyei). The author thanks Clifford Smyth for
helpful comments. The author is grateful to an anonymous referee for the
very careful reading of this manuscript and for suggesting substantial
improvements.


\begin{thebibliography}{99}

\bibitem{Alon-etc}
N.\ Alon, G.\ Brightwell, H.\ A.\ Kierstead,  A.\ V.\ Kostochka, and
P.\ Winkler,  
Dominating sets in k-majority tournaments, 
{\it J.\ Combin.\ Theory Ser.\ B}  {\bf 96} (2006), 374--387. 

\bibitem{Athanasiadis-extLin}
C.\ A.\ Athanasiadis, 
Extended Linial hyperplane arrangements for root systems and a
conjecture of Postnikov and Stanley, 
{\it J.\ Algebraic Combin.\ } {\bf 10} (1999), 207--225. 

\bibitem{Bednay-Bozoki}
  D.\ Bednay and S.\ Boz\'oki,
  Constructions for Nontransitive Dice Sets,
  preprint, 8 pages\\ \url{http://eprints.sztaki.hu/7623/1/Bednay_15_2507773_ny.pdf}

\bibitem{Erdos-Moser}
P.\  Erd\H os, and L.\ Moser, 
On the representation of directed graphs as unions of
orderings, 
{\it Magyar Tud. Akad.\ Mat.\ Kutat\'o Int.\ K\"ozl.\ } {\bf 9} (1964),
125--132. 

  
\bibitem{Gardner}
M.\ Gardner,
Mathematical Games: The Paradox of the Nontransitive Dice and the
Elusive Principle of Indifference,
{\it Sci.\ Amer.\ } {\bf 223} (1970), 110--114.


\bibitem{Gervacio-Maehara} 
S.\ V. Gervacio, Severino and H. Maehara, 
Partial order on a family of k-subsets of a linearly ordered set, 
{\it Discrete Math. } {\bf 306} (2006), 413--419. 

\bibitem{Honsberger}
R.\   Honsberger,
Some Surprises in Probability, Ch. 5 in Mathematical Plums
(Ed.\ R.\ Honsberger), Washington, DC: Math. Assoc. Amer., pp. 94--97,
1979.  

\bibitem{McGarvey}
David C.\ McGarvey, 
A theorem on the construction of voting paradoxes, 
{\it Econometrica} {\bf 21} (1953), 608--610. 

\bibitem{Moon-Moser}
J.\ W.\ Moon, and L.\ Moser, 
Generating oriented graphs by means of team comparisons, 
{\it Pacific J.\ Math.\ }  {\bf 21} (1967), 531--535.   


\bibitem{OEIS}
OEIS Foundation Inc.\ (2011), ``The On-Line Encyclopedia of Integer Sequences,''
published electronically at \url{http://oeis.org}.


\bibitem{Postnikov}
A.\ Postnikov, 
Intransitive trees,
{\it J.\ Combin.\ Theory Ser.\ A} {\bf 79} (1997), 360--366. 

\bibitem{Postnikov-Stanley}
A.\ Postnikov and R.\ P.\ Stanley, 
Deformations of Coxeter hyperplane arrangements,
{\it J.\ Combin.\ Theory Ser.\ A} {\bf 91} (2000), 544--97. 

\bibitem{Savage}
R.\ P.\ Savage, 
The Paradox of Nontransitive Dice,
{\it  The American Mathematical Monthly} 
{\bf 101} (1994), 429--436. 

\bibitem{Stearns}
R.\ Stearns, 
The voting problem, 
{\it Amer.\ Math.\ Monthly} {\bf 66} (1959), 761--763.   

\bibitem{Zaslavsky}
T.\ Zaslavsky,
Facing up to arrangements: face-count formulas for partitions of space
by hyperplanes,  
{\it Mem.\ Amer.\ Math.\ Soc.\ } {\bf 1} (1975), issue 1, no. 154, vii+102 pp. 

\end{thebibliography}
\end{document}